\title{Rationally connected foliations on surfaces}
\author{Sebastian Neumann}
\email{sebastian.neumann@math.uni-freiburg.de}
\date{18.11.2008}
\numberwithin{subsection}{section}
\newtheoremstyle{note}% name
  {}% Space above
  {}% Space below
  {}% Body font
  {}% Indent amount (empty = no indent, \parindent = para indent)
  {\bfseries}% Thm head font
  {.}% Punctuation after thm head
  { }% Space after thm head: " " = normal interword space
\newtheoremstyle{notes}% name
  {}% Space above
  {}% Space below
  {\itshape}% Body font
  {}% Indent amount (empty = no indent, \parindent = para indent)
  {\bfseries}% Thm head font
  {.}% Punctuation after thm head
  { }% Space after thm head: " " = normal interword space
\newtheoremstyle{numberlessthm}{}{}{\itshape}{}{\bfseries}{.}{ }{\thmnote{#3}}
\newtheoremstyle{numberlessnote}{}{}{}{}{\bfseries}{.}{ }{\thmnote{#3}}
\newtheoremstyle{missingthm}{}{}{}{}{\bfseries}{.}{ }{}
\theoremstyle{notes}
\newtheorem{theorem}{theorem}[section]
\newtheorem{lemma}[theorem]{Lemma}
\newtheorem{thm}[theorem]{Theorem}
\newtheorem{cor}[theorem]{Corollary}
\theoremstyle{note}
\newtheorem{defn}[theorem]{Definition}
\newtheorem{ex}[theorem]{Example}
\newtheorem{rem}[theorem]{Remark}
\newtheorem{notation}[theorem]{Notation}
\theoremstyle{note}
\theoremstyle{numberlessthm}
\theoremstyle{numberlessnote}
\theoremstyle{missingthm}
\theoremstyle{remark}
\newcommand{\R}{\mathbb{R}}
\newcommand{\N}{\mathbb{N}}
\newcommand{\Q}{\mathbb{Q}}
\newcommand{\pro}{{\mathbb P}} % Projektiver Raum (\P ist schon belegt)
\newcommand{\cO}{\mathcal{O}}  % Garbe O der regulären Fkt.
\newcommand{\cF}{\mathcal{F}}
\newcommand{\cG}{\mathcal{G}}
\newcommand{\T}{T{\!}X} %Tangentialbündel
\newcommand{\Ne}{N^1}
\newcommand{\Ner}{N^1_{\mathbb{R}}}
\begin{document}

\maketitle

% Abstract
\begin{abstract} In this short note we study foliations with rationally connected leaves on surfaces. Our main result is that on surfaces there exists a polarisation such that the Harder-Narasimhan filtration of the tangent bundle with respect to this polarisation yields the maximal rationally quotient of the surface.
\end{abstract}

%Intro
\section{Introduction} Let $X$ be a smooth projective variety. In this note, we are interested in foliations with rationally connected leaves. In \cite{KCT} the authors show how to construct such foliations from the \textit{Harder-Narasimhan Filtration} of the tangent bundle of the variety. This construction heavily depends on a chosen polarisation, and therefore the question arises how this foliation varies changing the polarisation.\newline
There is another way to construct a fibration with rationally connected fibers, the \emph{maximal rationally connected quotient}. This is a rational map, such that the fibers are rationally connected. Almost every rational curve lies in a fiber of this map.\newline
We can ask if the Harder-Narasimhan filtration of the tangent bundle always induces the maximal rationally quotient with respect to any polarisation. The answer is negative already on surfaces due to an example of T. Eckl (\cite{Eckl}).\newline
In this note we will prove that on surfaces there exists always a polarisation, such that the Harder-Narasimhan filtration yields the maximal rationally quotient.
\subsection{Acknowledgements} The author was supported in part by the Gra\-duier\-ten\-kolleg ``Globale Strukturen in Geometrie und Analysis'' of the\linebreak Deutsche For\-schungsgemeinschaft. The result is part of the author's forthcoming Ph.D. thesis written under the supervision of Stefan Kebekus. He would like to thank Stefan Kebekus, Thomas Eckl and Sammy Barkowski for numerous discussions.

The results in this note were presented at a Workshop in Grenoble in April 2008. Seemingly similar results have been obtained independently in \cite{CT}.

%Prelim
\section{Preliminary results and Notation}
Let $X$ be an $n$-dimensional projective variety over the complex numbers with an ample line bundle $H$. Given a torsion-free coherent sheaf $\cF$ on $X$, we define the \emph{slope of} $\cF$ \emph{with respect to} $H$ by

$$\mu_H(\cF):=\frac{c_1(\cF).H^{n-1}}{\textup{rk}(\cF)}.$$

We call $\cF$ \emph{semistable with respect to H} if for any nonzero proper subsheaf $\cG$ of $\cF$ we have $\mu_H(\cG)\leq\mu_H(\cF)$.\newline
If there exists a nonzero subsheaf $\cG\subset\cF$ such that $\mu_H(\cG)>\mu_H(\cF)$, we will call $\cG$ a \emph{destabilizing subsheaf} of $\cF$.\newline

For a proof of the following result, we refer the reader to \cite[Theorem 1.3.4]{HL}.
\begin{thm} Let $\cF$ be a torsion-free coherent sheaf on a smooth projective variety and $H$ be an ample line bundle on $X$. There exists a unique filtration of $\cF$, the so called \textup{Harder-Narasimhan filtration} or \textup{HN-filtration}, depending on the chosen ample line bundle
  $$0=\cF_0\subset\cF_1\subset\ldots\subset\cF_k=\cF$$
with the following properties:
\begin{itemize}
\item[(i)] The quotients $\cG_i:=\cF_i/\cF_{i-1}$ are torsion-free and semistable.
\item[(ii)] The slopes of quotients satisfy $\mu_H(\cG_1)>\ldots>\mu_H(\cG_k)$.
\item[(iii)]The sheaves $\cF_i$ are saturated in $\cF$.
\end{itemize}
\end{thm}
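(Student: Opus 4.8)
The plan is to derive the theorem from the existence of a unique \emph{maximal destabilizing subsheaf}, followed by an induction on the rank. Concretely, I would first produce a subsheaf $\cF_1\subseteq\cF$ which is semistable, saturated, of the largest slope attained by any nonzero subsheaf of $\cF$, and such that the largest slope attained by a subsheaf of $\cF/\cF_1$ is strictly smaller than $\mu_H(\cF_1)$. Then, setting $\cF_0=0$ and applying the same construction successively to $\cF/\cF_1,\ \cF/\cF_2,\dots$, I obtain a filtration whose successive quotients $\cG_i=\cF_i/\cF_{i-1}$ are precisely the maximal destabilizing subsheaves of the $\cF/\cF_{i-1}$; properties (i)--(iii) then fall out of the construction, and uniqueness follows by comparing an arbitrary filtration of this type with the one constructed. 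The only genuinely geometric ingredient, and the step I expect to be the main obstacle, is the boundedness statement that $\mu_H$ is bounded above on the set of nonzero subsheaves of $\cF$; everything after that is formal bookkeeping with slopes and ranks.

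For the boundedness I would fix an ample line bundle $A$ on $X$ and an integer $m\gg0$ with $\cF^{\vee}(mA)$ globally generated; twisting a surjection $\cO_X^{\oplus N}\twoheadrightarrow\cF^{\vee}(mA)$ by $-mA$ and dualizing, together with $\cF\hra\cF^{\vee\vee}$, yields an embedding $\cF\hra\bigoplus^{N}\cO_X(mA)$. Given a nonzero saturated subsheaf $\cG\subseteq\cF$ of rank $r$, let $\overline{\cG}$ be its saturation inside the bundle $\bigoplus^{N}\cO_X(mA)$; then $c_1(\cG).H^{n-1}\le c_1(\overline{\cG}).H^{n-1}$, and the quotient $\mathcal{Q}:=\bigl(\bigoplus^{N}\cO_X(mA)\bigr)/\overline{\cG}$ is torsion-free with $\mathcal{Q}(-mA)$ globally generated, so $c_1(\mathcal{Q})-(N-r)mA$ is represented by an effective divisor. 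Pairing with $H^{n-1}$ gives $c_1(\mathcal{Q}).H^{n-1}\ge(N-r)m\,(A.H^{n-1})$, whence
$$\mu_H(\cG)\le\mu_H(\overline{\cG})=\frac{Nm(A.H^{n-1})-c_1(\mathcal{Q}).H^{n-1}}{r}\le m\,(A.H^{n-1}).$$
Since saturating an arbitrary nonzero subsheaf does not decrease its slope, $\mu_H$ is bounded above by $m\,(A.H^{n-1})$ on all nonzero subsheaves of $\cF$; as $c_1(\cG).H^{n-1}\in\Z$ and $\rk(\cG)\le\rk(\cF)$, the supremum $\mu_{\max}(\cF)$ of these slopes is attained.

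To construct $\cF_1$ I would use that for subsheaves $\cG_1,\cG_2\subseteq\cF$ the exact sequence $0\to\cG_1\cap\cG_2\to\cG_1\oplus\cG_2\to\cG_1+\cG_2\to0$ makes $c_1$ and rank additive; since $\mu_H(\cG_1\cap\cG_2)\le\mu_{\max}(\cF)$, a short computation shows that $\mu_H(\cG_1)=\mu_H(\cG_2)=\mu_{\max}(\cF)$ implies $\mu_H(\cG_1+\cG_2)=\mu_{\max}(\cF)$. As ranks are bounded by $\rk(\cF)$, there is then a largest subsheaf $\cF_1$ with $\mu_H(\cF_1)=\mu_{\max}(\cF)$, and it contains every subsheaf of that slope. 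This $\cF_1$ is semistable (a subsheaf of strictly larger slope would contradict the definition of $\mu_{\max}(\cF)$) and saturated (its saturation has slope $\ge\mu_{\max}(\cF)$, hence $=\mu_{\max}(\cF)$, and contains $\cF_1$, so equals it). Finally, $\cF/\cF_1$ is torsion-free, and applying the same averaging argument to a hypothetical subsheaf $\mathcal{Q}\subseteq\cF/\cF_1$ with $\mu_H(\mathcal{Q})\ge\mu_H(\cF_1)$ shows that the largest slope attained in $\cF/\cF_1$ is strictly below $\mu_H(\cF_1)$.

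Finally I would iterate: put $\cF_0=0$, take $\cF_1$ as above, and inductively let $\cF_i/\cF_{i-1}\subseteq\cF/\cF_{i-1}$ be the maximal destabilizing subsheaf of $\cF/\cF_{i-1}$. The ranks $\rk(\cF_i)$ strictly increase, so this terminates with $\cF_k=\cF$; each $\cG_i=\cF_i/\cF_{i-1}$ is torsion-free and semistable, each $\cF_i$ is saturated in $\cF$, and $\mu_H(\cG_i)>\mu_H(\cG_{i+1})$ by the strict-decrease property, which gives (i)--(iii). For uniqueness, let $0=E_0\subset\cdots\subset E_l=\cF$ be any filtration satisfying (i)--(iii). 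For an arbitrary subsheaf $\cG\subseteq\cF$ the subquotients $(\cG\cap E_i)/(\cG\cap E_{i-1})$ embed into $E_i/E_{i-1}$, so each has slope at most $\mu_H(E_i/E_{i-1})\le\mu_H(E_1)$; summing degrees and ranks over $i$ gives $\mu_H(\cG)\le\mu_H(E_1)$. Taking $\cG=\cF_1$ forces $\mu_H(E_1)=\mu_{\max}(\cF)$, and equality in the same estimate forces $\cF_1\cap E_i=\cF_1\cap E_{i-1}$ for $i\ge2$, i.e.\ $\cF_1\subseteq E_1$; the reverse inclusion holds because $E_1$ has slope $\mu_{\max}(\cF)$ and $\cF_1$ is the largest such subsheaf. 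Hence $E_1=\cF_1$, and applying the argument to $\cF/\cF_1$ with the filtration $(E_i/\cF_1)_{i\ge1}$ completes the induction.
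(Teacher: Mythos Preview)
Your argument is correct and follows the standard route: bound the slopes of subsheaves by embedding $\cF$ into a split bundle, extract the maximal destabilizing subsheaf via the sum--intersection seesaw inequality, iterate on the quotient, and deduce uniqueness by filtering an arbitrary $\cG\subseteq\cF$ through a competing HN-filtration. This is precisely the proof in Huybrechts--Lehn \cite[1.3.4--1.3.7]{HL}.

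The paper itself, however, does not give a proof of this statement at all: it simply quotes the theorem and refers the reader to \cite[Theorem~1.3.4]{HL}. So there is no ``paper's own proof'' to compare against beyond that citation, and what you have written is exactly the argument the cited reference provides. One small point worth tightening in your write-up: the phrase ``largest subsheaf $\cF_1$ with $\mu_H(\cF_1)=\mu_{\max}(\cF)$'' is slightly ambiguous, since two subsheaves of the same rank and slope can differ in codimension~$\ge 2$; what you really want (and what your saturation step implicitly delivers) is the \emph{saturated} subsheaf of maximal rank among those of slope~$\mu_{\max}(\cF)$, which is then automatically unique and contains every subsheaf of that slope. With that clarification the argument is complete.
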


\begin{defn} Let $\cF$ be a torsion-free coherent sheaf on a smooth projective variety. The unique sheaf $\cF_1$ appearing in the Harder-Narasimhan filtration of $\cF$ is called \emph{the maximal destabilizing subsheaf of} $\cF$.
\end{defn}
\begin{defn} Let $\cF$ be a coherent torsion-free sheaf on a smooth projective variety with HNF with respect to an ample line bundle $H$
  $$0=\cF_0\subset\ldots\subset\cF_k=\cF.$$
If the slope of the quotient $\cF_i/\cF_{i-1}$ is positive with respect to $H$, then  $\cF_i$ is called \emph{positive with respect to} $H$.
\end{defn}
\begin{rem}
Note that the construction of the HNF naturally extends to $\Q$-- and $\R$--divisors, i.e we do not need to assume the chosen polarisation to be integral.
\end{rem}
Obviously, the Harder-Narasimhan filtration depends only on the numerical class of the chosen ample bundle. In particular it makes sense to ask how the filtration of a given sheaf depends on the ample bundle sitting in the finite dimensional  vectorspace of all divisors modulo numerical equivalence.
\begin{notation} We sometimes omit the polarisation in the notation of the slope, that is we write $\mu(\cF)$ for the slope of a sheaf $\cF$ with respect to a polarisation.
\end{notation}
We can now  state an important result originally formulated by  Miyaoka and explicitely shown in \cite{KCT}. For a survey on these and related results we refer the reader to \cite{KC}.
\begin{thm}\cite[Theorem 1]{KCT}\label{Keb} Let $X$ be a smooth projective variety and let $$0=\cF_0\subset \cF_1\subset\ldots\subset\cF_k=\T$$ be the Harder-Narsimhan filtration of the tangent bundle with respect to a polarisation $H$. Write $\mu_i:=\mu_H(\cF_i/\cF_{i-1})$ for the slopes of the quotients. Assume $\mu_1>0$ and set $m:=\max\left\{i\in\N|\mu_i>0\right\}$.
Then each $\cF_i$ with $i\leq m$ is a foliation with algebraic leaves and for general $x\in X$ the closure of the leaf through $x$ is rationally connected.
\end{thm}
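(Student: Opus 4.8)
The plan is to prove the theorem in three steps: a slope computation showing each $\cF_i$ with $i\leq m$ is closed under the Lie bracket; the Mehta-Ramanathan restriction theorem to make $\cF_i$ positive on a general curve; and the algebraic integrability theorem of Bogomolov-McQuillan and Bost to pass from this positivity to algebraic, rationally connected leaves. \emph{For the foliation property}, note first that since $\cF_i$ is saturated in $\T$ the quotient $\T/\cF_i$ is torsion free with $\mu_{\max}(\T/\cF_i)=\mu_{i+1}$. The Lie bracket induces an $\cO_X$-linear map $\psi\colon\Lambda^2\cF_i\to\T/\cF_i$ (it is $\cO_X$-bilinear modulo $\cF_i$ because $[fv,w]=f[v,w]-w(f)\,v$ with $w(f)\,v\in\cF_i$ for $v\in\cF_i$). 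Since tensor products of slope-semistable sheaves are again slope-semistable in characteristic zero, the graded pieces of $\cF_i$ having slopes $\mu_1>\dots>\mu_i>0$ (this is where $i\leq m$ enters) give $\mu_{\min}(\cF_i\otimes\cF_i)=2\mu_i$, hence $\mu_{\min}(\Lambda^2\cF_i)\geq 2\mu_i$. If $\psi\neq0$ its image $I\subseteq\T/\cF_i$ would be a nonzero quotient of $\Lambda^2\cF_i$, whence $2\mu_i\leq\mu_{\min}(I)\leq\mu_{\max}(\T/\cF_i)=\mu_{i+1}$, contradicting $\mu_i>\mu_{i+1}$ together with $\mu_i>0$. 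Thus $\psi=0$ and $\cF_i$ is integrable on the open locus where it is a subbundle, the complement of which has codimension at least two.

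\emph{For the positivity}, I would invoke Mehta-Ramanathan: for general $D_1,\dots,D_{n-1}\in|mH|$ with $m\gg0$ the restriction of the Harder-Narasimhan filtration to $C:=D_1\cap\dots\cap D_{n-1}$ is again a Harder-Narasimhan filtration, so every Harder-Narasimhan slope of $\cF_i|_C$ is positive, and by Hartshorne's criterion for ampleness of vector bundles on curves $\cF_i|_C$ is ample. Moreover, reducing $X$, $H$ and $\cF_i$ modulo a large prime $p$, the same slope bookkeeping applied to the ($p$-linear) $p$-th power operator $F^{*}\cF_i\to\T/\cF_i$, whose source has minimal slope $p\mu_i>\mu_{i+1}$ once $p\gg0$, shows that the reduction of $\cF_i$ is $p$-closed.

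\emph{For the leaves}, we are now exactly in the setting of the algebraic integrability theorem of Bogomolov-McQuillan (and Bost): a foliation $\cF_i\subseteq\T$ that is $p$-closed after reduction modulo $p\gg0$ and whose restriction to a general complete intersection curve $C$ is ample has algebraic leaves; the leaf $L_x$ through a general point $x\in C$ is algebraic and its closure $\overline{L_x}\subseteq X$ is a subvariety meeting $C$. The ampleness of $\cF_i|_C$ then powers a bend-and-break argument of Miyaoka-Mori type producing rational curves tangent to $\cF_i$ through general points of $\overline{L_x}$, and joining chains of such curves shows $\overline{L_x}$ is rationally connected. Since a general point of the general complete intersection curve $C$ is a general point of $X$, this gives the statement for all $\cF_i$ with $i\leq m$ simultaneously.

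The genuine obstacle is this last step: the algebraic integrability of a foliation with ample restriction to a general curve, and the rational connectedness of the leaf closures, is the heart of Bogomolov-McQuillan's theorem and cannot be obtained from slope estimates alone. It requires either the characteristic-$p$ degeneration together with a boundedness argument for the graphs (Chow points) of the foliation, or Bost's arithmetic algebraicity criterion for formal subschemes, followed by the Mori-theoretic production of the rational curves. Steps one and two, by contrast, are slope arithmetic plus the standard restriction and reduction-modulo-$p$ facts, the only delicate point in step two being the compatibility of the Harder-Narasimhan filtration with reduction modulo $p$.
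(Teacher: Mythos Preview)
The paper does not supply a proof of this theorem: it is quoted verbatim from \cite[Theorem~1]{KCT} and used as a black box in the rest of the note. There is therefore no ``paper's own proof'' to compare your proposal against.

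That said, your sketch is a faithful outline of the argument in \cite{KCT}. The integrability step via the slope inequality $\mu_{\min}(\Lambda^2\cF_i)\geq 2\mu_i>\mu_{i+1}=\mu_{\max}(\T/\cF_i)$ is exactly how it is done there, as is the appeal to Mehta--Ramanathan to get ampleness of $\cF_i|_C$ on a general complete intersection curve. You also correctly identify the genuine depth of the theorem as residing in the Bogomolov--McQuillan (or Bost) algebraicity criterion and the subsequent rational connectedness of the leaves; \cite{KCT} likewise imports this as a prior result rather than reproving it. One small remark: the explicit reduction-mod-$p$ and $p$-closure paragraph you include is not needed if one invokes the characteristic-zero statement of Bogomolov--McQuillan directly, and the phrase ``compatibility of the Harder--Narasimhan filtration with reduction modulo $p$'' hides real technicalities (the HN filtration need not specialize well) that are better avoided by staying in characteristic zero throughout, as \cite{KCT} does.
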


Let $X$ be a smooth projective variety and assume the conditions of Theorem (\ref{Keb}) are fulfilled. Thus  we obtain foliations $\cF_1,\ldots,\cF_k$ with algebraic and rationally connected leaves. By setting
	$$\begin{array}{crcl}
	q_i:&X&\dashrightarrow &\textup{Im}(q_i)\subset\textup{Chow}(X)\\
	&x &\mapsto &\cF_i\textup{-leaf through } x 
	\end{array}$$
we obtain a map, such that the closure of the general fibre is rationally connected, see \cite{KCT}, section 7.\newline
There is another map with this property called the \emph{maximal rationally connected quotient} or \emph{MRC-quotient} for short based on a construction by Campana \cite{Campana1}, \cite{Campana2} and Koll{\'a}r-Miyaoka-Mori \cite{KMM}.
\begin{thm}\cite[Chapter IV, Theorem 5.2]{K} Let $X$ be a smooth projective variety. There exists a variety $Z$ and a rational map $\phi:X\dashrightarrow Z$ with the following properties:
\begin{itemize}
  \item the fibers of $\phi$ are rationally connected,
  \item a very general fiber of $\phi$ is an equivalence class under rationally connectivity and
  \item up to birational equivalence the map $\phi$ and the variety $Z$ are unique.
\end{itemize}
\end{thm}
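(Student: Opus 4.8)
The plan is to realise the (a priori merely set-theoretic) relation ``$x$ and $y$ lie on a common connected chain of rational curves'' by an honest rational map, and then to improve rational chain connectedness of the fibres to genuine rational connectedness. First I would fix the basic relation: declare $x\sim y$ if there is a connected curve $C=\bigcup_j C_j$ with each component $C_j$ rational that passes through both $x$ and $y$. This is reflexive and symmetric, and allowing chains of arbitrary length makes it transitive, so it is an equivalence relation. The difficulty is that it is defined pointwise, and nothing yet guarantees that the set of classes carries the structure of a variety admitting a rational map from $X$.

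The heart of the argument is the construction of the quotient as an algebraic object. I would parametrise chains of rational curves of bounded degree by a locally closed subset of $\textup{Chow}(X)$, obtaining a family whose members sweep out, through a general point $x$, the set of points reachable from $x$ by a single such chain. Applying the quotient theory for proper equivalence relations generated by an algebraic family of curves, due to Koll\'ar--Miyaoka--Mori and Campana, one produces a first rational map $q_1\colon X\dashrightarrow Z_1$ whose general fibre is one class for the relation ``connected by a single chain''. One then pushes the family forward to $Z_1$ and repeats, obtaining $q_2\colon Z_1\dashrightarrow Z_2$, and so on. The crucial finiteness input is that the dimension of the general fibre strictly increases at each nontrivial step while remaining bounded by $\dim X$, so the process terminates after finitely many iterations in a map $\phi\colon X\dashrightarrow Z$ whose general fibre is a full rational-chain-connectivity class. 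The main obstacle is verifying that each family is \emph{proper}, i.e.\ that the connecting chains neither degenerate nor escape to the boundary, so that the quotient theorem genuinely applies; this is exactly where boundedness of the degree, obtained by treating one Chow component at a time, is indispensable.

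It then remains to upgrade ``rationally chain connected'' to ``rationally connected'' on the fibres. For this I would invoke the equivalence of the two notions for smooth projective varieties in characteristic zero: a general fibre $F$ of $\phi$ is smooth by generic smoothness, and a chain of \emph{free} rational curves in $F$ smooths to an irreducible rational curve, so a very general fibre is rationally connected and coincides with a single equivalence class under rational connectivity. This establishes the first two asserted properties.

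Finally, uniqueness up to birational equivalence follows from the characterisation of the fibres as full equivalence classes. Any rational map $\psi\colon X\dashrightarrow Y$ with rationally connected fibres has each fibre contained in one rational-connectivity class, hence in a single fibre of $\phi$; consequently $\phi$ factors rationally through $\psi$. Applying this to a second map $\phi'$ enjoying the same defining property produces mutually inverse birational maps between the two targets, so $\phi$ and $Z$ are unique up to birational equivalence.
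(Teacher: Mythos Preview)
The paper does not contain a proof of this theorem: it is quoted verbatim with the citation \cite[Chapter IV, Theorem 5.2]{K} and immediately followed by the next paragraph, so there is nothing in the paper to compare your argument against. Your sketch is essentially the classical Campana/Koll\'ar--Miyaoka--Mori construction that underlies the cited reference, so in that sense it is the ``same'' approach as the one the paper defers to.

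That said, a few points in your outline would need tightening before it becomes a proof. The most delicate step is the one you flag yourself: showing that the relation generated by a bounded family of rational curves is an \emph{open} proper relation so that the quotient theorem applies. Boundedness of degree alone is not enough; one has to arrange that through a general point the locus of curves in the family is proper, which is where Koll\'ar's careful choice of the parametrising family (and the use of free curves) enters. Also, in the uniqueness paragraph your factorisation goes the wrong way: if $\psi$ has rationally connected fibres contained in fibres of $\phi$, then it is $\psi$ that factors through $\phi$ (equivalently, $\phi$ dominates $\psi$), not the other way round; applying this in both directions to two maps satisfying the \emph{full} hypothesis then gives the birational inverse maps you want.
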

It is possible to ask if the HNF with respect to a certain polarisation yields the MRC-quotient. We will give a positive answer in the next section.

\section{Rationally connected fibration on surfaces and the MRC-quotient}\label{ergebnis}
In this section $X$ will be a smooth projective surface over the complex numbers.
We want to investigate the regions in the ample cone which induce the same HN-filtration. More precisely we divide the ample cone into parts, such that in each part, we get the same HN-filtration of the tangent bundle.  With this at hand we are able to show that the MRC-quotient comes from the Harder-Narasimhan filtration with respect to a certain polarisation.
%First of all, we observe that on surfaces it is enough to check semistability or destabilizing subsheaves of the tangent bundle with respect to line bundles.
%
%
%\begin{rem}
%Let $\mathcal{F}$ be a (torsion free) coherent subsheaf of $\T$. Then $\mathcal{F}^{**}$ will be reflexive and therefore a line bundle. We have $c_{1}(\mathcal{F})=c_{1}(\mathcal{F}^{**})$. Since $\T$ is a vector bundle, we get an inclusion $\mathcal{F}^{**}\subset \T^{**}=\T$. 
%\end{rem}
%
%

In order to compute the HN-filtration of the tangent bundle on surfaces, we only have to search for a destabilizing subbundle, such that the quotient is torsion-free.
\begin{lemma}\label{onebundle} Let $X$ be a smooth projective surface. If $\cF\subset \T$ is a destabilizing subsheaf with respect to a polarisation such that $\T/\cF$ is torsion-free, then the Harder-Narasimhan filtration is given by $0\subset \cF\subset \T$.
\end{lemma}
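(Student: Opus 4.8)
The plan is to exhibit the two-step filtration $0\subset\cF\subset\T$ directly and to check that it satisfies the three defining properties of the Harder--Narasimhan filtration from the existence-and-uniqueness theorem recalled in Section~2; the uniqueness clause then forces it to \emph{be} the HN-filtration, so in particular no refinement is needed. First I would pin down the ranks. Since $\cF$ is destabilizing, $\mu_H(\cF)>\mu_H(\T)$, so in particular $\cF\neq\T$; and $\cF$ is torsion-free, being a nonzero subsheaf of the locally free sheaf $\T$. Writing $\cG:=\T/\cF$, which is torsion-free by hypothesis, the sequence $0\to\cF\to\T\to\cG\to 0$ gives $\rk\cF+\rk\cG=2$; if $\rk\cG=0$ then $\cG$ is simultaneously a torsion sheaf and torsion-free, hence $\cG=0$ and $\cF=\T$, a contradiction. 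Therefore $\rk\cF=\rk\cG=1$.

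Next I would verify properties (i)--(iii) for the filtration $0\subset\cF\subset\T$, whose graded pieces are $\cF$ and $\cG$. Property (i): both $\cF$ and $\cG$ are torsion-free by the previous step, and a torsion-free sheaf of rank one on a smooth surface is semistable --- any nonzero subsheaf again has rank one, the corresponding quotient is a torsion sheaf, and the first Chern class of a torsion sheaf on a smooth surface is an effective divisor class, so no such subsheaf has strictly larger slope. Property (ii): additivity of $c_1$ in the exact sequence together with $\rk\cF=\rk\cG=1$ gives $\mu_H(\T)=\tfrac{1}{2}\bigl(\mu_H(\cF)+\mu_H(\cG)\bigr)$, and substituting the destabilizing inequality $\mu_H(\cF)>\mu_H(\T)$ yields $\mu_H(\cF)>\mu_H(\cG)$, which is exactly the required strict decrease of slopes. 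Property (iii): $\cF$ is saturated in $\T$ precisely because $\T/\cF=\cG$ is torsion-free, and $\T$ is trivially saturated in itself.

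Having established (i)--(iii), the uniqueness part of the Harder--Narasimhan theorem identifies $0\subset\cF\subset\T$ with the HN-filtration of $\T$, which is the assertion. I do not expect a genuine obstacle here: the whole argument reduces to the fact that rank-one torsion-free sheaves are automatically semistable --- so the two-step filtration cannot be refined --- together with the one-line slope comparison $\mu_H(\cF)>\mu_H(\T)\Rightarrow\mu_H(\cF)>\mu_H(\cG)$; everything else is bookkeeping with ranks and Chern classes in the short exact sequence.
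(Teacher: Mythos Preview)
Your proof is correct and follows essentially the same route as the paper's: verify that $0\subset\cF\subset\T$ satisfies the defining properties of the HN-filtration and then invoke uniqueness, the key computation in both being $\mu_H(\T)=\tfrac{1}{2}\bigl(\mu_H(\cF)+\mu_H(\cG)\bigr)$. The only difference is one of explicitness---you spell out why $\rk\cF=\rk\cG=1$, why rank-one torsion-free sheaves are semistable, and why torsion-free quotient means saturated, whereas the paper leaves these points tacit and records only the slope inequality.
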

\begin{proof}
Consider the exact sequence
	$$0\rightarrow \cF \rightarrow \T\rightarrow \T/\cF\rightarrow 0.$$
Using that the rank and the chern class is additiv in short exact sequences, we obtain
$$\mu (\T)=\frac{1}{2}\mu(\T/\cF)+\frac{1}{2}\mu(\cF).$$ Since $\mu(\cF)>\mu(\T)$, we therefore have $\mu(\cF)>\mu(\T/\cF)$. That is, $$0\subset \cF\subset \T$$ satisfies the properties of the Harder-Narasimhan filtration and by the uniqueness of the HN-filtration, we finish the proof.
\end{proof}
\begin{notation} 
We will write $\Ne(X)$ for the N\'eron-Severi group and $\textup{N}_{\mathbb{Q}}^1(X)$ (resp. $\Ner (X)$) for the vectorspace of $\Q$--divisors (resp. $\R$--divisors) modulo numerical equivalence on $X$. The convex cone of all ample $\R$--divisors in $\Ner(X)$ will be denoted by $\textnormal{Amp}_{\mathbb{R}}(X)$.
\end{notation}
Now we denote the regions in $\textnormal{Amp}_{\mathbb{R}}(X)$, we are interested in. Let $H\in \Ner(X)$ be an ample bundle. If $\T$ is not semistable with respect to $H$, let $D_{\T}(H)$ be the unique maximally destabilizing subbundle of $\T$. In this case, we call $$\Delta_{H}:=\left\{\tilde{H} \in \textnormal{Amp}_{\mathbb{R}}(X)| D_{\T}(H)=D_ {\T}(\tilde{H})\right\}$$ the \textit{destabilisation chamber with respect to $H$}. Note that if the tangent bundle is semistable with respect to a certain polarisation, then we get a chamber, such that for all polarisations in this chamber $\T$ is semistable. This region will be denoted by $\Sigma$ and we call it the \textit{semistable chamber}. That is $$\Sigma:=\left\{H \in \textnormal{Amp}_{\mathbb{R}}(X)| \T \textnormal{ is semistable with respect to } H\right\}.$$  

The destabilizing chambers and the chamber of stability give a decomposition of the ample cone. Concerning the structure of these chambers we prove the following lemma.
\begin{lemma}\label{structureOfChambers} Let $X$ be a smooth projective surface. We have
	\begin{enumerate}
		\item[(i)] The destabilisation chambers and the semistable chamber are convex cones in $\textnormal{Amp}_{\mathbb{R}}(X)$.
		\item[(ii)] The semistable chamber is closed in $\textnormal{Amp}_{\mathbb{R}}(X)$.
		\item[(iii)] The destabilisation chambers are open in $\textnormal{Amp}_{\mathbb{R}}(X)$.
	\end{enumerate}
\end{lemma}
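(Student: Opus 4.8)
The plan is to establish all three statements by analysing how the slope inequality defining each chamber varies linearly with the polarisation. On a surface the key observation is that for any two torsion-free sheaves of fixed rank and first Chern class, the function $H \mapsto \mu_H(\cG) - \mu_H(\cF)$ is, up to a positive rescaling by the ranks, a linear functional on $\Ner(X)$: indeed $c_1(\cG).H - (\rk\cG/\rk\cF)\, c_1(\cF).H$ is linear in $H$ (here $n=2$, so $H^{n-1}=H$). So each destabilisation chamber and the semistable chamber will be cut out of the open convex cone $\textnormal{Amp}_{\mathbb R}(X)$ by conditions that are linear or strict-linear in $H$.

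For (i), I would argue as follows. Fix $H_0$ and let $\cD := D_{\T}(H_0)$ be its maximal destabilizing subbundle. For $\tilde H$ in the chamber $\Delta_{H_0}$, the sheaf $\cD$ is, by Lemma \ref{onebundle}, exactly the first step of the HN-filtration with respect to $\tilde H$, so the chamber is
$$\Delta_{H_0} = \left\{\tilde H \in \textnormal{Amp}_{\mathbb R}(X) \;\middle|\; \mu_{\tilde H}(\cD) > \mu_{\tilde H}(\cG) \text{ for every subsheaf } \cG \subset \T\right\},$$
equivalently $\mu_{\tilde H}(\cD) > \mu_{\tilde H}(\T/\cD)$ together with $\cD$ itself being the maximal destabilizer. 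The cleanest route: membership in $\Delta_{H_0}$ is equivalent to the single pair of conditions ``$\cD$ destabilizes $\T$ with respect to $\tilde H$'' and ``$\T/\cD$ is torsion-free'', since Lemma \ref{onebundle} then pins down the HN-filtration and hence $D_{\T}(\tilde H)=\cD$. The torsion-freeness of $\T/\cD$ is independent of $\tilde H$, and ``$\cD$ destabilizes $\T$'' reads $2\,c_1(\cD).\tilde H > c_1(\T).\tilde H$, a strict linear inequality in $\tilde H$; its solution set intersected with the convex cone $\textnormal{Amp}_{\mathbb R}(X)$ is convex and a cone (the inequality is homogeneous of degree one). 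For $\Sigma$, semistability with respect to $\tilde H$ means $2\,c_1(\cG).\tilde H \le c_1(\T).\tilde H$ for \emph{all} subsheaves $\cG\subset\T$; this is an intersection of (non-strict) closed half-spaces, hence convex, and again homogeneous, hence a cone.

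Statements (ii) and (iii) then follow from the shape of these defining inequalities. $\Sigma$ is an intersection of closed half-spaces with the cone $\textnormal{Amp}_{\mathbb R}(X)$, hence closed \emph{in} $\textnormal{Amp}_{\mathbb R}(X)$; this gives (ii). For (iii), if $\tilde H \in \Delta_{H_0}$ then the strict inequality $2\,c_1(\cD).\tilde H > c_1(\T).\tilde H$ persists in an open neighbourhood of $\tilde H$, and within that neighbourhood $\cD$ still destabilizes $\T$ while $\T/\cD$ stays torsion-free; by Lemma \ref{onebundle} the HN-filtration there is $0\subset\cD\subset\T$, so $D_{\T}=\cD$ throughout, i.e. the whole neighbourhood lies in $\Delta_{H_0}$.

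The step I expect to be the main obstacle is the implicit finiteness/well-definedness issue lurking in (i) and (iii): a priori the family of candidate destabilizing subsheaves $\cG\subset\T$ is infinite, so one must be careful that ``$\cD$ is the maximal destabilizer at $\tilde H$'' is genuinely captured by the two clean conditions above rather than by an infinite conjunction of linear inequalities whose solution set might fail to be open or convex. The resolution is precisely Lemma \ref{onebundle}: on a surface, once we exhibit \emph{one} destabilizing subsheaf with torsion-free quotient, uniqueness of the HN-filtration forces it to be \emph{the} maximal destabilizer, collapsing the infinite condition to a finite one. I would make sure to invoke this reduction explicitly at the start, after which (i)–(iii) become the elementary convexity/openness/closedness facts about linear and strict-linear inequalities sketched above.
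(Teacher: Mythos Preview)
Your proposal is correct and follows essentially the same approach as the paper: both arguments rest on the linearity of the intersection product in $H$ together with Lemma~\ref{onebundle} to collapse ``$\cD$ is the maximal destabilizer'' to the single strict inequality $2\,c_1(\cD).\tilde H > c_1(\T).\tilde H$, after which convexity, closedness of $\Sigma$, and openness of the $\Delta_H$ are immediate. The paper phrases (i) via closure under sums and (ii) via sequential continuity rather than your half-space language, but the content is identical.
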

\begin{proof} The convexity property follows from the linearity of the intersection product and the uniqueness of the HN-filtration. In detail: Let $H_{1}$ and $H_{2}$ be two polarisations belonging to the same chamber. This means, that there exists a subsheaf $L\subset \T$ which is maximal destabilizing both with respect to  $H_1$ and $H_2$. We obtain
$$c_1(L).(H_{1}+H_{2})=c_1(L).H_{1}+c_1(L).H_{2}>\frac{1}{2}c_{1}(\T).(H_{1}+H_{2}).$$
By Lemma (\ref{onebundle}), we deduce that $L$ is the maximally destabilizing subsheaf with respect to $H_{1}+H_{2}$.\newline
In the same spirit we prove the convexity of $\Sigma$: Let $H_{1}$ and $H_{2} \in \Sigma$. This means by definition that for all subsheaves $L\subset \T$ the inequality $c_1(L).H_{i}\leq \frac{1}{2}c_{1}(\T).H_{i}$, $i=1,2$ holds. Therefore by linearity of the intersection product the inequality holds for $H_{1}+H_{2}$. This proves $(i)$.\newline
The second statement follows from the continuity of the intersection product. Let $H_{n}\in\Sigma$ be a sequence  converging to $H\in\textnormal{Amp}(X)$. Then for a given subsheaf $L\subset\T$ one has $\mu_{H_{n}}(L)\leq \mu_{H_{n}}(\T)$ for each $n\in\N$. Passing to the limit the inequality still holds by continuity.\newline
Statement $(iii)$ is also a direct consequence of the continuity of the intersection product: Let $H\in\Delta_{\tilde{H}}$. Let $L$ be the maximal destabilizing subsheaf of the tangent bundle. That is we have $$(c_1(L)-\frac{1}{2}c_{1}(\T)).H>0.$$ This inequality holds in a neighbourhood of $H$. By Lemma (\ref{onebundle}) the subsheaf $L$ gives the HN-filtration in a neighbourhood of $H$.
\end{proof}
%\begin{cor} Let $X$ be a smooth projective surface. If $\T$ is not semistable with respect to all polarisations, then there exists exactly one destabilizing subsheaf of $\T$.
%\end{cor}
%
%
In order to prove our main result, we use the following corollary.
\begin{cor}\label{corollary} Let $X$ be a smooth projective surface. Let $l$ be a linesegment in $\textup{Amp}_{\R}(X)$. If $\T$ is not semistable with respect to all polarisation on $l$, then $l$ lies completely in one destabilizing chamber.
\end{cor}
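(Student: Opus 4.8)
The plan is to run a connectedness argument on $l$, using that the destabilisation chambers are open and that they (together with $\Sigma$) partition $\textup{Amp}_{\R}(X)$. First I would note that since $\T$ is not semistable with respect to \emph{any} polarisation on $l$, no point of $l$ lies in the semistable chamber $\Sigma$; hence, by the decomposition of the ample cone recalled before Lemma \ref{structureOfChambers}, every $H\in l$ lies in some destabilisation chamber. Consequently $l$ is covered by the subsets $l\cap\Delta$, where $\Delta$ runs over those destabilisation chambers that meet $l$.

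Next I would check that this cover is a partition of $l$ into nonempty, relatively open pieces. Distinct destabilisation chambers are disjoint: by construction a destabilisation chamber is precisely the set of polarisations $\tilde H$ with a prescribed value of the maximal destabilising subsheaf $D_{\T}(\tilde H)$, so the chambers are the fibres of the assignment $H\mapsto D_{\T}(H)$ on the non-semistable locus and are pairwise disjoint. Moreover each $l\cap\Delta$ is relatively open in $l$: the ample cone is open in $\Ner(X)$, each $\Delta$ is open in $\textup{Amp}_{\R}(X)$ by Lemma \ref{structureOfChambers}(iii), and $l\subset\textup{Amp}_{\R}(X)$. Thus $l$ is written as a disjoint union of nonempty relatively open subsets, one for each destabilisation chamber meeting it. Since a line segment is connected, only one such chamber can occur, which is exactly the claim. (Equivalently, if $H_1,H_2\in l$ had $D_{\T}(H_1)\neq D_{\T}(H_2)$, then $l\cap\Delta_{H_1}$ and the union of $l\cap\Delta$ over the remaining chambers would be two nonempty disjoint relatively open sets covering $l$, contradicting connectedness.)

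I do not expect a genuine obstacle here; the only points needing (minor) care are the disjointness of distinct chambers — which is immediate from their definition as fibres of $H\mapsto D_{\T}(H)$ — and the fact that their intersections with $l$ are relatively open, which is Lemma \ref{structureOfChambers}(iii) combined with the openness of the ample cone. Note that Lemma \ref{onebundle} is what justifies, at an earlier stage, that on a surface the maximal destabilising subsheaf determines the whole HN-filtration, so the chambers are well defined in the first place.
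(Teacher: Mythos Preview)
Your argument is correct and is precisely the reasoning the paper leaves implicit: the corollary is stated without proof, as an immediate consequence of Lemma~\ref{structureOfChambers}, and your connectedness argument using part~(iii) together with the fact that the destabilisation chambers are the fibres of $H\mapsto D_{\T}(H)$ on the non-semistable locus is exactly how one unpacks it.
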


In order to prove semistability of the tangent bundle on certain surfaces having many automorphisms, we give a useful lemma.
\begin{lemma}\label{FolAndAut} Let $X$ be a smooth projective surface and let $\sigma\in\textnormal{Aut}^{0}(X)$. Let $\cF$ be the maximal destabilizing subsheaf of $\T$ with respect to a polarisation. Then we have $\sigma^{*}(c_1(\mathcal{F}))=c_1(\mathcal{F})$. In particular: If the slope of $\cF$ is positive then the automorphism $\sigma$ maps each leaf of $\mathcal{F}$ to another leaf of $\mathcal{F}$.
\end{lemma}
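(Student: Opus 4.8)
The plan is to reduce the statement to two facts already at our disposal: the uniqueness of the Harder--Narasimhan filtration (via Lemma (\ref{onebundle})), and the fact that a connected group cannot act nontrivially on the discrete lattice $\Ne(X)$.

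\emph{Step 1: the numerical equality.} Since $\textnormal{Aut}^{0}(X)$ is connected, its natural action on $\Ne(X)$ by pulling back numerical classes must be trivial: it is given by a homomorphism from a connected group to the discrete automorphism group of $\Ne(X)$. Concretely, choosing a path $(\sigma_{t})_{t\in[0,1]}$ from $\textnormal{id}_{X}$ to $\sigma$ inside $\textnormal{Aut}^{0}(X)$, the assignment $t\mapsto \sigma_{t}^{*}c_{1}(\cF)$ is a continuous path lying in the discrete set $\Ne(X)$, hence constant, and its value at $t=0$ is $c_{1}(\cF)$. This gives $\sigma^{*}(c_{1}(\cF))=c_{1}(\cF)$, and more generally shows that $\sigma^{*}$ acts as the identity on $\Ner(X)$; in particular $\sigma^{*}H=H$ for every ample class $H$.

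\emph{Step 2: the foliation is preserved.} Assume the slope of $\cF=\cF_{1}$ is positive (if $\T$ is semistable then $\cF=\T$ and both assertions are trivial, so we may assume $\rk\cF=1$). By Theorem (\ref{Keb}), $\cF$ is then a foliation with algebraic leaves. The differential of $\sigma$ is a vector bundle isomorphism $\T\xrightarrow{\sim}\T$ lying over $\sigma$; let $\sigma_{*}\cF\subset\T$ denote the image of $\cF$ under it, a rank-$1$ subsheaf. By Step 1, $c_{1}(\sigma_{*}\cF)=c_{1}(\cF)$, so $\mu_{H}(\sigma_{*}\cF)=\mu_{H}(\cF)>\mu_{H}(\T)$ and $\sigma_{*}\cF$ is again destabilising; moreover $\T/\sigma_{*}\cF\cong\sigma_{*}(\T/\cF)$ is torsion-free because $\cF$ is saturated in $\T$ (property (iii) of the HN-filtration) and $\sigma$ is an isomorphism. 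By Lemma (\ref{onebundle}), $0\subset\sigma_{*}\cF\subset\T$ is the Harder--Narasimhan filtration of $\T$ with respect to $H$, so uniqueness forces $\sigma_{*}\cF=\cF$. Thus the differential of $\sigma$ carries $\cF$ into itself, so $\sigma$ maps every local integral curve of $\cF$ to an integral curve of $\cF$; being a biregular automorphism, it therefore permutes the leaves of $\cF$.

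\emph{Main obstacle.} Everything here is soft; the one input that genuinely uses $\sigma\in\textnormal{Aut}^{0}(X)$ rather than an arbitrary automorphism is the triviality of the action on $\Ner(X)$ in Step 1. Without it, $\sigma_{*}\cF$ would only be identified as the maximal destabilising subsheaf for the possibly different polarisation $\sigma^{*}H$, and the conclusion would break down. The remaining points --- the compatibility of $c_{1}$ with transport along an isomorphism, and the torsion-freeness of the quotient --- are routine and I would not dwell on them.
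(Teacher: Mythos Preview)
Your proof is correct and follows essentially the same line as the paper's: both hinge on the observation that $\textnormal{Aut}^{0}(X)$, being connected, acts trivially on the discrete lattice $\Ne(X)$, and then use the uniqueness of the Harder--Narasimhan filtration to conclude that the transported subsheaf coincides with $\cF$. The only cosmetic difference is that the paper runs the triviality-of-action argument through $\sigma_{*}H=H$ and the projection formula rather than through $\sigma^{*}c_{1}(\cF)=c_{1}(\cF)$ directly, and deduces the numerical equality of Chern classes as a consequence of $\sigma^{*}\cF=\cF$ rather than establishing it first; your ordering is arguably cleaner and your handling of the torsion-free quotient more explicit.
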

\begin{proof}Let $H\in\Ne(X)$ and let $\cF$ be the maximal destabilizing subsheaf of $\T$ with respect to $H$.  We compute the slope of $\sigma^{*}(T_{\mathcal{F}})$:
$$
\begin{array}[t]{rcl}
	\mu(\sigma^{*}(c_1(\cF))) &=& H.\sigma^{*}(c_1(\cF))\\
	 & = &  \sigma_{*}(H).c_1(\cF)\\
	 & = &  H.c_1(\cF)\\
	 & > &\frac{1}{2}c_{1}(\T).H.
\end{array}
$$
We give an explanation of the third equality: Recall that the group of automorphisms acts on the N\'eron-Severi group. Since $N^1(X)$ is discrete $\textup{Aut}^0(X)$ acts trivially on $N^1(X)$, i.e.  the pushforward of $\sigma$ has to give the same element in $N^{1}(X)$.\newline
Therefore, we have shown that $\sigma^{*}(T_{\mathcal{F}})$ is also a destabilizing bundle and by the uniqueness of the HN-filtration we finish the proof.
\end{proof}
\begin{ex}\textit{Hirzebruch Surfaces}\newline
Let $\Sigma_{n}$ be the $n$-th Hirzebruch surface, $\pi: \Sigma_{n}\longrightarrow \mathbb{P}^{1}$ the projection onto the projective line. We denote the fiber under the projection with $f$ and the distinguished section with selfintersection $-n$ with $C_{0}$. Recall (see \cite{H}, chapter V.2)  that $\textnormal{Num}_{\mathbb{R}}(\Sigma_{n})=<C_{0},f>$ and a divisor $D\equiv_{num}aC_{0}+bf$ is ample if and only if $a>0$ and $b>an$. The canonical bundle is given by $-K_{\Sigma_{n}}=2C_{0}+(2+n)f$.
The relative tangent bundle of $\pi$ is a natural candidate for a destabilizing subbundle. We have the sequence
\begin{center}
	$0\rightarrow T_{\Sigma_{n}/\mathbb{P}^{1}}\rightarrow T\Sigma_{n} \rightarrow \pi^{*}T\mathbb{P}^{1} \rightarrow 0$
\end{center}
Let $H:=xC_{0}+yf$ be a polarisation. Then one can compute that $T_{\Sigma_{n}/\mathbb{P}^{1}}$ is destabilizing if and only if $-2x-nx+2y>0$. In particular we compute for $n\geq 2$:
\begin{center}
	$-2x-nx+2y>-2x-nx+2nx=-2x+nx\geq0$.
\end{center}
Therefore, for $n\geq2$ the HN-filtration is given by
\begin{center} 
	$0\subset T_{X/\mathbb{P}^{1}}\subset \T$
\end{center}
for all polarisations. In other words we obtain only one destabilizing chamber.

\begin{figure}[htbp]
\includegraphics[scale=0.8]{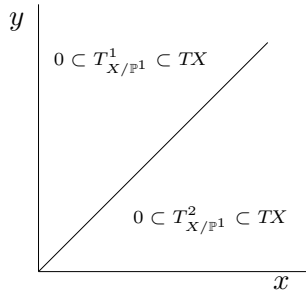}
\caption{The ample cone of $X=\Sigma_{0}$ and the chamber structure. Here $T^{1}_{X/\mathbb{P}^{1}}$ and $T^{2}_{X/\mathbb{P}^{1}}$ denote the relative tangent bundle of the first and second projection.}
\begin{picture}(0,0)
\put(-45.0,140.0){\tiny{$0\subset T^{1}_{X/\mathbb{P}^{1}}\subset \T$}}
\put(-15.0,80.0){\tiny{$0\subset T^{2}_{X/\mathbb{P}^{1}}\subset \T$}}
\put(-62,155.0){$y$}
\put(38.0,55.0){$x$}
\end{picture}
\end{figure}

For $n=0$ we have $\Sigma_{0}=\mathbb{P}^{1}\times\mathbb{P}^{1}$ and we get three chambers. Two destabilizing chambers correspond to the two relative tangent bundles of the projections. They are cut out by the inequalitys $x>y$ and $x<y$.  There is a chamber of semistability, which is determined by the equation $x=y$.

For $n=1$ we see that for $x>\frac{3}{2}y$ the relative tangent bundle is destabilizing. Since $\Sigma_{1}$ is the projective plane with one point blown up, say $p$, we know the group of automorphisms, which is the automorphism group of the projective plane leaving $p$ fixed. The destabilizing foliation corresponds to the radial foliation through $p$ in the plane.\newline
So if there were another foliation $\cF$ coming from the Harder-Narasimhan filtration, we could deform the leaves with these automorphisms. Then we would again obtain leaves of this foliation by Lemma (\ref{FolAndAut}). So unless $\cF$ is the radial foliation, we could produce infintely many singularities, which is absurd. Therefore we conclude that the tangent bundle is semistable for $x\leq\frac{3}{2}y$.
\begin{rem} From this example follows, that the stabilizing chamber can be both lower dimensional or equal the dimension of the ample cone.
\end{rem}

\end{ex}
Now we want to answer the question if there always exists a polarisation, such that the HNF gives rise to the MRC-quotient.
\begin{thm} Let $X$ be a uniruled  projective surface. Then there exists a polarisation, such that the maximal rationally quotient of $X$ is given by the foliation associated to highest positive term in the HNF associated with this polarisation.
\end{thm}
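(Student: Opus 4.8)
The plan is to separate according to the birational type of the uniruled surface $X$. By the Enriques classification $X$ is either rational, or birationally a ruled surface over a smooth curve $C$ of genus $g(C)\geq1$; accordingly the target $Z$ of the MRC-quotient $\phi\colon X\dashrightarrow Z$ is a point, respectively (birational to) $C$. In the first case I will produce a polarisation with $\cF_m=\T$, so that the associated map contracts $X$ to a point; in the second, a polarisation for which $\cF_1$ is the foliation whose leaves are the fibres of $\pi\colon X\to C$. The guiding remark is that it suffices to work inside the open subcone $\{H\in\textup{Amp}_\R(X): -K_X\cdot H>0\}$, nonempty because a uniruled surface has $K_X$ non-pseudoeffective, on which $\mu_H(\T)>0$ and hence Theorem~\ref{Keb} applies to every destabilising subsheaf.

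\emph{Rational case.} Fix any ample $H$ with $-K_X\cdot H>0$. If $\T$ is $H$-semistable then the HN-filtration is $0\subset\T$ with $\mu_1=\mu_H(\T)>0$, so $m=k=1$ and $\cF_m=\T$. If not, it is $0\subset\cF_1\subset\T$ with $\mu_1=\mu_H(\cF_1)>\mu_H(\T)>0$, so by Theorem~\ref{Keb} the foliation $\cF_1$ has algebraic leaves with rational general member, and these leaves form a family of rational curves over a curve $B$ which, being dominated by the rational surface $X$, is rational, so $B\cong\pro^1$. Resolving the indeterminacy of the induced map $X\dashrightarrow B$ by $\rho\colon X'\to X$ realises a general leaf as a smooth fibre $\widetilde C$ (with $\widetilde C^{2}=0$) of a morphism $X'\to\pro^1$; since a fibration with rational general fibre has no multiple fibres, the pulled-back foliation on $X'$ is the relative tangent sheaf, of first Chern class $-K_{X'}-2\widetilde C$, and pushing down ($\rho_*K_{X'}=K_X$, $\rho_*\widetilde C=C$, the class of a general leaf on $X$) gives $c_1(\cF_1)=-K_X-2C$. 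Therefore $c_1(\T/\cF_1)=2C$, and since $C$ is represented by an irreducible curve and $H$ is ample, $\mu_2=\mu_H(\T/\cF_1)=2\,(C\cdot H)>0$, whence $m=k=2$ and again $\cF_m=\T$. In both cases $q_m$ is the constant map, which is the MRC-quotient of the rationally connected $X$.

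\emph{Ruled case.} Now $g(C)\geq1$ and the MRC map $X\dashrightarrow C$ is in fact a morphism $\pi\colon X\to C$ (compose with $C\hookrightarrow\textup{Jac}(C)$, using that rational maps from a smooth variety to an abelian variety are morphisms), with general fibre $\cong\pro^1$. Let $\cF\subset\T$ be the relative tangent sheaf of $\pi$: a sub-line-bundle with $\T/\cF$ torsion-free, with leaves the fibres of $\pi$, and (no multiple fibres) with $c_1(\cF)=-K_X+\pi^*K_C$, so $c_1(\T/\cF)=\pi^*(-K_C)\equiv(2-2g(C))F$ for $F$ a fibre. Fix ample divisors $A$ on $C$, $H_0$ on $X$ and put $H_\delta:=\pi^*A+\delta H_0$, ample for all $\delta>0$. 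From $c_1(\cF)\cdot F=2$, $-K_X\cdot F=2$ and $\pi^*A\equiv(\deg A)F$ one gets $\mu_{H_\delta}(\cF)\to2\deg A>0$ and $\mu_{H_\delta}(\cF)-\mu_{H_\delta}(\T)\to\deg A>0$ as $\delta\to0$; hence for small $\delta$, $\cF$ is destabilising with $\mu_1>0$, and by Lemma~\ref{onebundle} the HN-filtration is $0\subset\cF\subset\T$. As $g(C)\geq1$, $\mu_2=\mu_{H_\delta}(\T/\cF)=(2-2g(C))\,(F\cdot H_\delta)\leq0<\mu_1$, so $m=1$ and $\cF_m=\cF$. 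Finally $q_1$ sends a general $x$ to $\pi^{-1}(\pi(x))$, so under $C\hookrightarrow\textup{Chow}(X)$, $c\mapsto[\pi^{-1}(c)]$, the map $q_1$ is identified with $\pi$, the MRC-quotient.

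The main point — and the only place the two cases genuinely diverge — is the identity $c_1(\T/\cF_1)=2C$ in the rational case: it is exactly the rationality of the parameter curve $B$ (equivalently $g(B)=0$) that forces $\mu_2>0$, hence $m=k$ and $\cF_m=\T$; knowing only that $\cF_1$ has algebraic leaves one could not exclude that $q_m$ is a nontrivial fibration, which is precisely what occurs (with parameter curve of genus $\geq1$) in the ruled case. Points still to be checked, but routine, are the absence of multiple fibres in a fibration with rational general fibre, and that $q_m$ as a map to $\textup{Chow}(X)$ recovers $\phi$ up to birational equivalence rather than merely agreeing on fibres.
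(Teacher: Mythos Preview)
Your argument is correct, but it proceeds quite differently from the paper's. In the rationally connected case the paper chooses a \emph{special} polarisation $H_\epsilon=l+\epsilon H_2$ with $l$ a very free rational curve, invokes the chamber structure (Corollary~\ref{corollary}) to show that the destabilising sub-line-bundle $L$ is constant along the segment $\{H_\epsilon\}_{0<\epsilon<1}$, and then uses ampleness of $\T|_l$ to force both $\mu_{H_\epsilon}(L)$ and $\mu_{H_\epsilon}(\T/L)$ positive for small $\epsilon$; no Chern-class formula for $L$ is ever needed. You instead fix an \emph{arbitrary} $H$ with $-K_X\cdot H>0$, identify the base of the leaf-map as $\pro^1$ via L\"uroth, and compute $c_1(\T/\cF_1)=2C$ explicitly by resolving and pushing forward---thereby proving the stronger statement that every such $H$ already yields $\cF_m=\T$. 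In the irrational ruled case the paper argues in one line that $\T$ cannot be $H_1$-semistable (else $X$ would be rationally connected by Theorem~\ref{Keb}) and that the resulting positive-slope $\cF_1$ must give the MRC quotient; you take the opposite, constructive route, producing $H_\delta=\pi^*A+\delta H_0$ for which the relative tangent sheaf of the MRC morphism $\pi$ is visibly maximal destabilising with $\mu_2\le 0$. Your approach trades the paper's use of the chamber decomposition and very free curves for direct first-Chern-class computations (relying on the absence of multiple fibres in a $\pro^1$-fibration); it is longer but more explicit, and in the rational case yields a sharper conclusion. The ``routine'' points you flag at the end are indeed routine, though the formula $c_1(\cF)=-K_X+\pi^*K_C$ in the ruled case is literally correct only because the critical locus of $\pi$ is zero-dimensional (no multiple fibres); you use exactly this, so the argument stands.
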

\begin{proof} 
To start, observe that there is always a polarisation $H_{1}$ such that $c_{1}(\T).H_{1}>0$. Assume on the contrary, that each ample bundle intersects $c_{1}(\T)$ negatively or zero. This means that $K_{X}$ is pseudoeffective. On the other hand there exists a free rational curve $f:\pro^1\rightarrow X$. See \cite[Corollary 4.11]{Deb} for a prove of the existence of such a curve. Writing $$f^*(\T)=\cO(a_1)\oplus\cO(a_2)$$  with $a_1+a_2\geq 2$, we compute $$K_X.f_*\pro^1=-a_1-a_2\leq -2,$$ a contradiction.\newline
First let us assume that $X$ is not rationally connected. Then we take the polarisation $H_{1}$ with $c_{1}(\T).H_{1}$>0. There exists a destabilizing subsheaf $\cF$ of $\T$, since otherwise $X$ would be rationally connected by Theorem (\ref{Keb}). Furthermore the slope of $\cF$ has to be bigger than $c_1(\T).H_1$ and therefore positive. So this sheaf will give a foliation with rationally connected leaves and hence the maximal rationally quotient.\newline
Consider the case where $X$ is rationally connected. Then we fix a very free rational curve $l$ on $X$. For a proof of the existence of a very free rational curve see \cite[Corollary 4.17]{Deb}. This means that $\T|_l$ is ample. So we know that each quotient of $\T|_l$ has strictly positive degree.\newline
Since $l$ is movable, it is in particular nef. Let $H_{2}$ be an ample class. Because $l$ is nef, we know that $H_{\epsilon}:=l+\epsilon H_{2}$ is ample in $N^{1}_{\mathbb{Q}}(X)$ for any $\epsilon >0$. Observe that $c_{1}(\T).H_{\epsilon}>0$ for sufficiently small $\epsilon$, say for $0<\epsilon<1$. If $\T$ is semistable with respect to a certain polarisation $H_{\epsilon}$ with $0<\epsilon<1$, the claim follows since $\T$ has positive slope and induces a trivial foliation which gives the rationally connected  quotient. If $\T$ is not semistable for all polarisations $H_{\epsilon}$ with $0<\epsilon<1$, let $L_{\epsilon}$ be the destabilizing subsheaf of $\T$ with respect to $H_{\epsilon}$. Because of Corollary (\ref{corollary}) the ray $H_{\epsilon}$ stays in one destablizing chamber. So $L:=L_{\epsilon}$ remains constant.\newline
Now it is clear that for sufficiently small $\epsilon$ both, the slope of $L$ and the slope of $\T/L$ will be positive with respect to $H_{\epsilon}$. Therefore the HN-fitration of $\T$ with respect to $H_{\epsilon}$ yields the maximal rationally connected quotient.
\end{proof}

\bibliographystyle{alpha}
\bibliography{books}

\def\cprime{$'$}

\end{document}